\newcommand{\be}{\begin{equation}}
\newcommand{\ee}{\end{equation}}
\newcommand{\bb}{\bigskip}
\newcommand{\la}{\label}
\renewcommand{\div}{\,{\rm div}\,}
\newcommand{\curl}{\,{\rm curl}\,}
\newcommand{\rf}[1]{(\ref{#1})}
\newcommand{\ve}{\varepsilon}
\newcommand{\ZZ}{\bf Z}
\newcommand{\R}{{\mathbf R}}
\newcommand{\PP}{\mathcal P}
\newcommand{\EE}{\mathcal E}
\newcommand{\dd}{\Delta}
\newcommand{\om}{\omega}
\newcommand{\sm}{\smallskip}
\newcommand{\tor}{{\bf T}^2}
\newcommand{\QED}{\mbox{}\hfill$\Box$}
\newcommand{\ue}{u_\ve}
\newcommand{\inttor}{\int_{\tor}}
\newcommand{\ome}{\om_\ve}
\newcommand{\zt}{{\ZZ}^2}
\newcommand{\tjd}{(t_1,t_2)}
\newcommand{\sconv}{S^{\rm conv}}
\newcommand{\omk}{\om_K}
\numberwithin{equation}{section}
\newtheorem{theorem}{Theorem}[section]
\newtheorem{corollary}{Corollary}[section]
\newtheorem{proposition}{Proposition}[section]
\newtheorem{remark}{Remark}[section]
\newenvironment{proof}
{\begin{trivlist}\item[]{\bf Proof.}}
{\hspace*{\fill}\QED\end{trivlist}}
\begin{document}
\title{On 2d incompressible Euler equations with partial damping}


\author{Tarek~Elgindi,\,\, Wenqing~Hu,\,\,
 Vladim\'i{}r~\v Sver\'ak}
 \date{}

\maketitle

\begin{abstract}
We consider various questions about the 2d incompressible Navier-Stokes and Euler equations on
a~torus when dissipation is removed from or added to some of the Fourier modes.
\end{abstract}

\section{Introduction}\label{intro}

Our motivation for this paper comes from the theory of turbulence for the 2d Navier-Stokes equation
\be\la{1}
\begin{array}{rcl}
u_t+u\nabla u +\nabla p-\nu \dd u & = & f\,,\\
\div u & = & 0\,,\\
u|_{t=0} & = & u_0
\end{array}
\ee
on the torus
$
\tor=\R^2/2\pi \ZZ^2\,.
$
  We have mostly in mind the situation when spatial Fourier modes of $f$ and $u_0$ are assumed to be supported only on relatively low frequencies.
One can consider both deterministic and ``white noise in time" $f$. The canonical (conjectural) picture due to Kraichnan~\cite{[Kraichnan 1967]} (perhaps under some genericity assumptions) is that of the energy and enstrophy cascades which spread the excitations to other Fourier modes through the nonlinearity.

What happens when we remove the dissipation on a certain set of $K\subset \ZZ^2$  frequencies? Using standard notation for Fourier series,
\be\la{2}
u(x,t)=\frac 1{(2\pi)^2} \sum_{k\in{\ZZ}^2} \hat u(k,t) e^{ikx}\,,
\ee
we define an operator $Y$ by
\be\la{3}
\widehat{(Y\!u)}(k)=
\begin{cases}
-|k|^2 \,\hat u(k)  \, & k\notin K\,,\\
\hskip10pt{0}   & k\in K\,.
\end{cases}
\ee
and consider the following modification of~\rf{1}
\be\la{4}
\begin{array}{rcl}
u_t+u\nabla u +\nabla p-\nu Y\! u & = & f\,,\\
\div u & = & 0\,,\\
u|_{t=0} & = & u_0\,.
\end{array}
\ee
We will assume that $K$ is symmetric (i.\ e.\ invariant under $k\to -k$), to keep the solution real-valued.
One of the simplest cases should be when we have $K=\{\bar k,-\bar k\}$ for some high freguency $\bar k$. When $\bar k$ is quite higher than the non-zero (spatial) frequencies of the forcing $f$, the usual heuristics used in turbulence investigations suggests that the effect of removing the dissipation from the frequencies in $K$ should not be  dramatic.  The non-linearity will still spread the energy and enstrophy between many frequencies, and there should be enough dissipation in the system to keep the solution bounded for all time. The situation is somewhat similar to adding partial damping to a system for which the  principles of Statistical Mechanics work: the interactions in such a system should tend to distribute energy uniformly between all degrees of freedom, and hence a system for which some of the degrees of freedom are forced while some other degrees of freedom are damped  should still reach some kind of dynamical equilibrium (at least if there is enough interaction between the damped and forced parts of the system).
 While this sounds very plausible, establishing such statements rigorously is usually difficult.

In the context of the 2d turbulence discussed above, there might be interesting issues related to Kraichnan's downward cascade of energy, see~\cite{[Kraichnan 1967]}, and if in the case $K=\{\bar k, -\bar k\}$  we take $\bar k$ to be one of the lowest non-trivial frequencies, the solution  will presumably not stay bounded generically, even if the forcing acts ``far away" in the Fourier space.

Ultimately we would like to study the situation for various sets of frequencies $K$ and various forces $f$ (both deterministic and random) and some of the problems seem to be approachable by existing methods. In particular, it might be interesting to investigate under which assumptions one can extend the results of Hairer-Mattingly~\cite{HM} on invariant measures for the stochastic forcing, or the results of Constantin-Foias~\cite{Constantin} on attractors for deterministic forcing, see also Ladyzhenskaya's work~\cite{Ladyzhenskaya2}. Another set of results which might be interesting to consider in the context here are those of Kuksin~\cite{Kuksin}.

 In this paper our goals are much more modest and we will deal only with the  simpler but still interesting situation when $f=0$, leaving the case  $f\ne0$ to future work. It turns out that even in the case of no forcing there are still some interesting open problems. For example, one can ask under which assumptions on $K$ the ``generic" solutions\footnote{
 Even for the simplest non-empty $K$ which are invariant under in involution $k\to-k$ one has non-trivial steady states (e.\ g.\ vector fields with stream functions $A\cos (kx)+B\sin (kx)$), and hence the it seems reasonable to consider generic solutions in the above question.
} of~\rf{5} approach zero (and in which norms) as $t\to\infty$.
This seems to be a difficult question to which we do not know the answer. It is clearly related to the problem of energy or enstrophy transfer between various Fourier modes, which is also one of the important themes of turbulence investigations.

 Hence our topic in this paper will be the initial-value problem
 \be\la{5}
\begin{array}{rcl}
u_t+u\nabla u +\nabla p-\nu Y\! u & = & 0\,,\\
\div u & = & 0\,,\\
u|_{t=0} & = & u_0\,.
\end{array}
\ee

We will consider three cases:

\sm
\noindent
(i) {\it $K$ is finite}\\
 In this case we show that there exists a unique smooth solution $u(t)$ which converges to a steady-state solution of the Euler equation with all non-trivial Fourier modes supported in $K$, see Theorem~\ref{thm1}. An important part of the proof of this result is Theorem~\ref{fsupport} which says that any solution of the 2d incompressible Euler equation on $\tor$ which is supported on finitely many Fourier modes is a steady state.

 \sm
 \noindent
 (ii) {\it $K$ is cofinite} (i.\ e.\ ${\ZZ}^2\setminus K$ is finite)\\
 In this case we can establish an analogy of the Yudovich existence and uniqueness theorem for 2d incompressible Euler solutions when the initial vorticity $\om_0$ is bounded, i.\ e.\ $\om_0\in L^\infty(\tor)$, see Theorem~\ref{thm2}. However, the presence of the operator $Y$ in the equation leads to weaker estimates than those known for $Y=0$. In particular, we were unable to show that the norm $||\om(t)||_{L^\infty}$ will stay bounded. The dynamics for $t\to\infty$ is also unclear without extra assumptions.

 \sm
 \noindent
 (iii) {\it Both $K$ and ${\ZZ}^2\setminus K$ are infinite }\\
 Here can show the existence of weak solutions, but their uniqueness is unclear.

 \section{Preliminaries}\label{prelim}
 We keep the notation from the previous section and, in addition, we define an operator $Z$ as follows
 \be\la{6}
 \widehat{(Z u)}(k)=
 \begin{cases}
-|k|^2 \,\hat u(k)  \, & k\in K\,,\\
\hskip10pt{0}   & k\notin K\,,
\end{cases}
 \ee
 so that
 \be\la{6b}
 Y+Z=\dd\,.
 \ee
 In what follows we will assume that $\nu>0$ and we will also consider a small parameter $\ve>0$. The function spaces considered below will be spaces of functions (possibly vector-valued) on the torus $\tor$.
 \begin{proposition}\label{prop1}
 Let $K$ be any symmetric subset of ${\ZZ}^2$. For each div-free vector field $u_0\in L^2$ on the torus~$\tor$ the initial-value problem
 \be\la{7}
\begin{array}{rcl}
u_{\ve t}+\ue\nabla \ue +\nabla p_\ve-\nu Y\! \ue -\ve Z \ue & = & 0\,,\\
\div \ue & = & 0\,,\\
\ue|_{t=0} & = & u_0\,.
\end{array}
\ee
has a unique solution $\ue$ such that
\be\la{8}
\ue \in C([0,\infty),L^2_x)\cap L^2_t\dot H^2_x\,.
\ee
The solution $\ue$ is smooth in $\tor\times (0,\infty)$ and satisfies the energy identity
\be\la{9}
\inttor \frac12\,|\ue(x,t)|^2\,dx+\int_0^t\inttor \left(-\nu (Y\!\ue) \ue-\ve (Z\ue) \ue \right)\,dx\,dt'=\inttor\frac12 \,|u_0(x)|^2\,dx\,
\ee
for each $t\ge 0$.
Moreover, if $\om_0=\curl u_0\in L^2$, then $\ue\in L^2_t \dot H^2_x$ and satisfies
\be\la{10}
\inttor \frac12\,\ome^2(x,t)\,dx+\int_0^t\inttor \left(-\nu (Y\!\ome)\ome-\ve(Z\ome)\ome\right)\,dx\,dt'=\inttor\frac12\,\om_0^2(x)\,dx\,
\ee
for each $t\ge 0$, where $\ome=\curl \ue$.
\end{proposition}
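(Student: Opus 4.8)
The whole point of the parameter $\ve>0$ is that it restores a \emph{coercive} dissipation on every Fourier mode, so that \rf{7} is nothing but a two–dimensional Navier--Stokes equation with the Laplacian replaced by a uniformly elliptic Fourier multiplier. Concretely, set $A_\ve=-\nu Y-\ve Z$. In Fourier variables $A_\ve$ is multiplication by $\nu|k|^2$ for $k\notin K$ and by $\ve|k|^2$ for $k\in K$, so it is a non-negative self-adjoint operator satisfying
\be\la{coerc}
\min(\nu,\ve)\,\|\nabla u\|_{L^2}^2\ \le\ \langle A_\ve u,u\rangle\ \le\ \max(\nu,\ve)\,\|\nabla u\|_{L^2}^2\,.
\ee
Thus $A_\ve$ is comparable to $-\dd$ and generates an analytic semigroup. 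Since $A_\ve$, the Leray projection $\PP$ onto div-free fields, and $\curl$ are all Fourier multipliers they commute, and applying $\PP$ to \rf{7} gives the abstract evolution equation
\be\la{abstract}
u_{\ve t}+A_\ve \ue+\PP(\ue\nabla\ue)=0\,,\qquad \ue|_{t=0}=u_0\,,
\ee
to which the standard well-posedness machinery for the 2d Navier--Stokes equations applies, with $-\dd$ replaced by $A_\ve$.

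First I would construct solutions by Galerkin truncation: project onto the modes $|k|\le N$, solve the resulting finite system of ODEs, and obtain $N$-uniform a priori bounds by energy estimates. Testing the truncated equation against $\ue^N$ and using $\inttor(\ue\nabla\ue)\ue\,dx=0$ for div-free $\ue$, together with \rf{coerc}, controls $\sup_t\|\ue^N\|_{L^2}^2$ and $\int_0^\infty\|\nabla\ue^N\|_{L^2}^2\,dt$, giving the energy-space bound $C([0,\infty),L^2_x)\cap L^2_t\dot H^1_x$. Since $A_\ve$ commutes with $\curl$ and there is no vortex stretching in two dimensions, the vorticity solves $\ome_t+\ue\nabla\ome+A_\ve\ome=0$; testing this against $\ome^N$ and using $\inttor(\ue\nabla\ome)\ome\,dx=0$ yields, when $\om_0\in L^2$, control of $\sup_t\|\ome^N\|_{L^2}^2$ and of $\int_0^\infty\|\nabla\ome^N\|_{L^2}^2\,dt$, i.e.\ the bound $\ue\in L^2_t\dot H^2_x$. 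These estimates being uniform in $N$, I would pass to the limit $N\to\infty$; in two dimensions the enstrophy control excludes finite-time blow-up, so the solution is global.

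Next I would upgrade regularity and derive the identities. Parabolic smoothing for the analytic semigroup $e^{-tA_\ve}$ bootstraps $\ue$ to $C^\infty(\tor\times(0,\infty))$, exactly as for $-\dd$; for merely $L^2$ data this smoothing controls $\dot H^2$ only away from $t=0$ (with the usual $t^{-1}$ weight), whereas $\om_0\in L^2$ gives $\ue\in L^2_t\dot H^2_x$ up to $t=0$. With this regularity the energy and enstrophy balances hold as \emph{identities}: multiplying \rf{abstract} by $\ue$ and integrating in space and time gives \rf{9}, and multiplying the vorticity equation by $\ome$ gives \rf{10}, the nonlinear terms dropping out by incompressibility. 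For uniqueness, subtract two solutions with the same data, test the difference equation against the difference $w$, and estimate the nonlinearity by the 2d interpolation inequality $\|w\|_{L^4}^2\le C\|w\|_{L^2}\|\nabla w\|_{L^2}$; absorbing the top-order term into $\langle A_\ve w,w\rangle$ and applying Gronwall forces $w\equiv0$.

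The reassuring feature here is that, thanks to $\ve>0$, there is \emph{no genuine obstacle} beyond the classical 2d theory: the modified dissipation is uniformly elliptic, so every estimate valid for $-\nu\dd$ works for $A_\ve$ with constants degraded only by $\min(\nu,\ve)$ and $\max(\nu,\ve)$. The two points deserving care are (a) checking that $A_\ve$ commutes with $\PP$ and $\curl$, which keeps the vorticity equation in its clean divergence form and is immediate since all operators are Fourier multipliers, and (b) justifying that \rf{9} and \rf{10} are equalities rather than inequalities, which is exactly where the $t>0$ smoothness is used; the restriction to $\om_0\in L^2$ in the enstrophy statement simply reflects the non-integrability of $\|\ue(t)\|_{\dot H^2}^2$ near $t=0$ for merely $L^2$ data.
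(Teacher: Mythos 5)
Your argument is correct, but it takes a genuinely different route from the paper. The paper first establishes solvability of the linear problem \rf{lin}, rewrites \rf{7} as the integral equation $u=U+B(u,u)$, and runs a contraction-mapping argument in $L^4_tL^4_x$ on a short time interval using the Ladyzhenskaya embedding \rf{imb1}, globalizing via the non-increasing $L^2$ norm; existence and uniqueness come out of the fixed point simultaneously. You instead follow the Leray--Hopf/Galerkin route: coercivity of $A_\ve=-\nu Y-\ve Z$ (your \rf{coerc}, which is the same structural observation the paper relies on implicitly -- for $\ve>0$ the dissipation is uniformly elliptic, so the classical 2d machinery applies verbatim), $N$-uniform energy and enstrophy estimates, compactness, and then a separate Gronwall uniqueness argument with $\|w\|_{L^4}^2\le C\|w\|_{L^2}\|\nabla w\|_{L^2}$. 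Both are standard and both work; your approach yields the global energy and enstrophy balances more directly from the Galerkin approximants, while the paper's mild formulation packages uniqueness into the contraction class without a second estimate. You also correctly flag the two points the paper emphasizes only in passing: that $A_\ve$, the Leray projection and $\curl$ all commute as Fourier multipliers (the paper's remark that $Y$ and $Z$ commute with derivatives), and that the absence of a vorticity maximum principle is harmless here because everything is done with $L^2$ energy estimates. Your observation that \rf{8} should control $\dot H^2$ only away from $t=0$ for merely $L^2$ data, with the full $L^2_t\dot H^2_x$ bound reserved for $\om_0\in L^2$, is consistent with the intended reading of the statement.
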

 \begin{proof}
 This can be proved by classical energy estimates techniques for the 2d Navier-Stokes, see for example~\cite{Ladyzhenskaya}. The proof is slightly harder than for 2d Navier-Stokes without boundaries, as we do not have the maximum principle for the vorticity $\om=\curl u$, which we would have for the usual Navier-Stokes. Therefore we have to work purely with the energy estimates, similarly to the situation for 2d Navier-Stokes with boundaries (where use of the maximum principle is not very useful near boundaries, due to lack of control of the boundary condition for $\om$).
 In our case here we can proceed as follows. First, we show that the corresponding linear problem
 \be\la{lin}
 \begin{array}{rcl}
 u_{\ve t} +\nabla p_\ve-\nu Y\! \ue -\ve Z \ue & = & \div f\,,\\
\div \ue & = & 0\,,\\
\ue|_{t=0} & = & u_0\,.
 \end{array}
 \ee
 is uniquely solvable for $u_0\in L^2$ and $f\in L^2_tL^2_x$ in $C([0,\infty),L^2)\cap L^2_t\dot H^1_x$, with a bound on the norm given by $C(||u_0||_{L^2_x}+||f||_{L^2_tL^2_x})$.
 We now proceed along the usual lines and write~\rf{7} in the form~\rf{lin} with $f=-u\otimes u$ and invert the linear operator, to get an equation of the form
 \be\la{inteq}
 u=U+B(u,u)
 \ee
 where $U$ is the solution of~\rf{lin} for $f=0$ and $B(u,u)$ is the solution of~\rf{lin} with $u_0=0$ and $f=-u\otimes u$.
 We then recall the imbedding (see \cite{Ladyzhenskaya})
 \be\la{imb1}
 L_t^\infty L^2_x\cap L^2_t\dot H^1_x\subset L^4_tL^4_x\,,
 \ee
 and consider~\rf{inteq} as an equation in $L^4_tL^4_x$ in the space-time domain
 $\tor\times(0,T)$ for a sufficiently small $T$ such that $u\to U+B(u,u)$ is a contraction in $L^4(\tor\times(0,T))$. As the $L^2_x-$norm for the solution is non-increasing, this procedure gives a global solution by routine arguments.
 Obtaining smoothness of the solution is easier than the corresponding problem for 2d Navier-Stokes in domains with boundaries, as we can take derivatives of our equation in any direction. (Note that the operators $Y$ and $Z$ commute with derivatives.)

 \end{proof}

\sm
\noindent
By a {\it weak solution} of the initial value problem~\rf{5} with $u_0\in H^1$ we mean a function
\be\la{11}
u\in C([0,\infty),L^2)\cap L^\infty_t\dot H^1_x\cap\{v, \, Y\!v\in L^2_tL^2_x\}\,
\ee
which satisfies the equation in the sense of distribution for a suitable $p$, and attains the initial datum $u_0$ at $t=0$. (Note that the last condition is well-defined, due to the continuity of $u(t)$ as an $L^2-$valued function.)

\begin{corollary}
For any symmetric set $K\subset{\ZZ}^2$ and any initial datum $u_0\in H^1$ the initial-value problem~\rf{5} has at least one weak solution.
\end{corollary}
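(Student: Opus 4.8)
The plan is to obtain $u$ as the limit of the regularized solutions $\ue$ furnished by Proposition~\ref{prop1} as $\ve\to0^+$, so I first record the bounds, uniform in $\ve$, coming from the two energy identities. In Fourier variables the dissipation terms are nonnegative: $\inttor(-Y\ue)\ue\,dx=\sum_{k\notin K}|k|^2|\hat\ue(k)|^2\ge0$ and $\inttor(-Z\ue)\ue\,dx=\sum_{k\in K}|k|^2|\hat\ue(k)|^2\ge0$, and likewise for the vorticity in~\rf{10}. Since $u_0\in H^1$ we have $\om_0\in L^2$, so \rf{9} and \rf{10} give
\be
\|\ue\|_{L^\infty_tL^2_x}\le\|u_0\|_{L^2_x},\qquad \|\ue\|_{L^\infty_t\dot H^1_x}=\|\ome\|_{L^\infty_tL^2_x}\le\|\om_0\|_{L^2_x}\,.
\ee
Using $|\hat\ome(k)|=|k|\,|\hat\ue(k)|$ for divergence-free fields, the vorticity-dissipation term in \rf{10} equals $\sum_{k\notin K}|k|^4|\hat\ue(k)|^2=\|Y\ue\|_{L^2_x}^2$, whence $\int_0^\infty\|Y\ue\|_{L^2_x}^2\,dt\le\tfrac1{2\nu}\|\om_0\|_{L^2_x}^2$ and $\sqrt\ve\,\|Z\ue\|_{L^2_tL^2_x}\le C$, so in particular $\ve Z\ue\to0$ in $L^2_tL^2_x$.

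Next I would bound $\partial_t\ue$ uniformly in a negative space to gain compactness. Applying the Leray projection $\PP$ to \rf{7} removes the pressure and gives $\partial_t\ue=\PP\!\left(-\div(\ue\otimes\ue)+\nu Y\ue+\ve Z\ue\right)$. In two dimensions $H^1\hookrightarrow L^4$, so the uniform $L^\infty_tH^1_x$ bound makes $\ue\otimes\ue$ bounded in $L^\infty_tL^2_x$, hence $\div(\ue\otimes\ue)$ bounded in $L^\infty_tH^{-1}_x$; the other two terms are bounded in $L^2_tL^2_x$ by the first paragraph. Therefore $\partial_t\ue$ is bounded in $L^2_{t,{\rm loc}}H^{-1}_x$ uniformly in $\ve$. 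Since $H^1\hookrightarrow L^2\hookrightarrow H^{-1}$ with the first embedding compact on $\tor$, the Aubin--Lions--Simon lemma applies on each interval $[0,T]$; a diagonal argument over $T=1,2,\dots$ then extracts a subsequence with $\ue\to u$ strongly in $L^2_{t,{\rm loc}}L^2_x$, weakly-$*$ in $L^\infty_t\dot H^1_x$, and with $Y\ue\rightharpoonup Yu$ weakly in $L^2_tL^2_x$.

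It then remains to pass to the limit in the weak formulation against a smooth divergence-free test field $\phi$ compactly supported in time. The only term requiring strong convergence is the nonlinear one: writing $\ue\otimes\ue-u\otimes u=\ue\otimes(\ue-u)+(\ue-u)\otimes u$ and using the $L^\infty_tL^2_x$ bound, one gets $\ue\otimes\ue\to u\otimes u$ in $L^1_{t,{\rm loc}}L^1_x$, which suffices as $\nabla\phi\in L^\infty$. For the linear term I would use the self-adjointness of the real Fourier multiplier $Y$, $\inttor(Y\ue)\phi\,dx=\inttor\ue\,(Y\phi)\,dx$, and pass to the limit by strong $L^2_{t,{\rm loc}}L^2_x$ convergence; the term $\ve\inttor\ue\,(Z\phi)\,dx=O(\ve)$ drops out. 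Finally, membership of $u$ in the class \rf{11} follows from weak lower semicontinuity (for $L^\infty_t\dot H^1_x$) and identification of the weak limit of $Y\ue$ as $Yu\in L^2_tL^2_x$, while $u\in C([0,\infty),L^2)$ and attainment of the initial datum follow from $\partial_t u=\PP(-\div(u\otimes u)+\nu Yu)\in L^2_{t,{\rm loc}}H^{-1}_x$ via the standard Lions--Magenes continuity lemma.

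The main work is the compactness step. Because $Y$ annihilates the frequencies in $K$, there is no $L^2_t\dot H^2_x$ control as in the Navier--Stokes case, and the $\ve$-uniform $L^\infty_t\dot H^1_x$ bound coming from the vorticity identity \rf{10}---which is exactly where the hypothesis $u_0\in H^1$ enters---is what must carry the Aubin--Lions argument. The delicate point is the uniform $L^2_{t,{\rm loc}}H^{-1}_x$ estimate on $\partial_t\ue$, in particular checking that the poorly-controlled term $\ve Z\ue$ is harmless since only $\sqrt\ve\,Z\ue$ need be bounded; once this is in place the remaining steps are routine.
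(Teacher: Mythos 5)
Your proof is correct and follows the same route as the paper: pass to the limit $\ve\searrow 0$ in the regularized problem of Proposition~\ref{prop1}, using the enstrophy identity~\rf{10} for the $\ve$-uniform bounds and the Aubin--Lions lemma for strong $L^2_{t,{\rm loc}}L^2_x$ compactness. The paper only sketches this; your write-up supplies the details (the $H^{-1}$ bound on $\partial_t\ue$, the identification $\int_0^\infty\|Y\ue\|_{L^2}^2\,dt\le\tfrac1{2\nu}\|\om_0\|_{L^2}^2$, and the treatment of the $\ve Z\ue$ term), all of which check out.
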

\begin{proof}
This follows by letting $\ve\searrow0$ and choosing a subsequence $\ue$ which converges strongly in $L^2_tL^2_x$. The pre-compactness in $L^2_tL^2_x$ of the family $\ue$ for $0<\ve<\ve_0$ follows from~\rf{10}, standard imbeddings and the Aubin-Lions lemma (the usual tool for showing that the control of the time derivative in weak spaces, together with a suitable compact imbedding in $x$ is enough for pre-compactness, see~\cite{Aubin, Lions}).
\end{proof}

The uniqueness of the solutions seems to be open except in the case when $K$ is finite, which will be addressed in the next section. The case when $K$ is co-finite and, in addition, $\om_0\in L^\infty$ can also be handled, due its similarities with Euler's equation and the Yudovich theorem~\cite{Yudovich}, see Section~\ref{cofinite}.

\section{Finitely many undamped frequencies}\label{finite}
Let us now assume that the set $K$ of the undamped frequencies is finite. In this case one expects that the high spatial frequency behavior of the solution is the same as for Navier-Stokes and if the forcing term vanishes, as we assume here, the norm
$||u(t)||_{L^2_x}$ will be non-increasing, due to the energy estimate. Let
\be\la{kappa}
\kappa=\max_{k\in K} |k|\,.
\ee
Clearly
\be\la{3-1}
||\nabla v||^2\le -(Y\! v, v) + \kappa^2 ||v||_{L^2_x}^2
\ee
for each $v\in H^1(\tor)$. Hence estimate~\rf{9} in Proposition~\ref{prop1} given uniform control of the solutions $\ue$ in $L^\infty_tL^2_x\cap L^2_t\dot H^1_x$, just as in the case of standard Navier-Stokes. A minor modification of standard 2d Navier-Stokes arguments now gives existence and uniqueness of solutions to the initial-value problem~\rf{5} with $u_0\in L^2$ in the class $C([0,\infty), L^2)\cap L^2_t\dot H^1_x$, together with their smoothness for $t>0$. An interesting question about these solutions is their long-time behavior. We address this in the next theorem, which also recapitulates the existence result just discussed.

We introduce the following notation. In the following definition a solution of the Euler equation means a vector field satisfying the equation for a suitable pressure function. For~$E,I\ge0$ we let
\be\la{ekeidef}
\EE_{K,E,I}=\left\{
v\colon\!\tor\!\to\!\R^2,\begin{array}{l}
\hbox{$v$ solves 2d steady incompressible Euler and, in addition,}\\

\hbox{$\hat v(k)=0$ for each $k\notin K$} \,,\,
\inttor |v|^2=2E\,,\,
\inttor|\curl v|^2=I.
\end{array}\!\!\right\}
\ee

 As $K$ is finite, $\EE_{K,E,I}$ is defined by some polynomial equations on a finite-dimensional space, and therefore it is a finite-dimensional algebraic variety (possibly with some non-smooth points).

We will say that the set $K$ is {\it degenerate} if all its points lie on a circle centered at the origin, or all its points lie on a line passing through the origin. The condition is equivalent to saying that any div-free velocity field with Fourier support in $K$ is a steady solution of Euler's equation, see Section~\ref{modes} for a more detailed discussion.
\begin{theorem}\label{thm1}
Assume the set $K$ of the undamped frequencies is symmetric and finite. Then for each div-free vector field on the torus $u_0\in L^2$  the initial-value problem~\rf{5} has a unique solution $u$ in the class $C([0,\infty), L^2)\cap L^2_t \dot H^1_x$ which satisfies the energy identity
\be\la{3-2}
\inttor \frac 12\,|u(x,t)|^2\,dx+\int_0^t\inttor -\nu (Y\!u\,,\,u)\,dx\,dt'=\inttor \frac 12 |u_0(x)|^2\,dx\,,
\ee
for each $t\ge 0$. The solution is smooth for $t>0$. As $t\to \infty$ the trajectory $u(t)$ stays in a compact subset of $C^k$ for each $k=1,2,\dots$ and its $\omega-$limit set is a subset of a connected component of $\EE_{K,E,I}$, where
\be\la{3-6}
E=\lim_{t\to\infty} \inttor \frac 12 \,|u(x,t)|^2\,dx\,,\qquad I=\lim_{t\to\infty}\inttor \om^2(x,t)\,dx\,.
\ee
 In addition, if the set $K$ is degenerate and $I$ is sufficiently small (depending on $K$ and $\nu$), the $\om$-limit set of the solution $u(t)$ consist of a single point which is approached exponentially in any $C^k$-norm.

\end{theorem}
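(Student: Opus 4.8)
The plan is to treat existence, uniqueness, smoothing and the energy identity as the $\ve\searrow0$ limit of Proposition~\ref{prop1}, and then to run a dynamical-systems argument on the $\om$-limit set that exploits the finiteness of $K$ together with the full dissipation on $\zt\setminus K$. First I would pass to the limit in \rf{9}: by \rf{3-1} this gives uniform control in $L^\infty_tL^2_x\cap L^2_t\dot H^1_x$, and because $K$ is finite the term $\ve(Z\ue,\ue)$ lives on the finitely many modes of $K$ and drops out, yielding \rf{3-2}. Uniqueness and smoothing for $t>0$ follow as for 2d Navier--Stokes, the only point being that the undamped modes span a finite-dimensional space on which all norms are comparable to the $L^2$ norm (so they cannot spoil regularity), while on $\zt\setminus K$ the operator $Y$ is the full Laplacian. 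The identity \rf{3-2} shows the energy is non-increasing, hence converges to the $E$ of \rf{3-6}, and \rf{10} (valid for $t\ge t_0>0$ by smoothing) shows the enstrophy is non-increasing, hence converges to $I$. We may assume $u_0$ has zero mean, since $\hat u(0,t)$ is conserved and $\widehat{Y\!u}(0)=0$ always; this guarantees a positive dissipative gap on $\zt\setminus K$. Since the dissipation in \rf{3-2} equals $\nu\|\nabla\uzk\|_{L^2}^2$, I obtain
\be
\int_{t_0}^\infty\|\nabla\uzk(t)\|_{L^2}^2\,dt<\infty.
\ee

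Next I would establish uniform-in-time bounds on every higher norm for $t\ge t_0$. Writing $u=u_K+\uzk$ (with $P_K,P_{\zt\setminus K}$ the Fourier projections onto $K$ and its complement), the part $u_K$ is controlled in every $C^k$ by $\|u_K\|_{L^2}\le\sqrt{2E}$ since $K$ is finite, while $\uzk$ solves a forced Navier--Stokes equation with full dissipation and source $-P_{\zt\setminus K}\PP(u\nabla u)$ (pressure eliminated by the Leray projection $\PP$), whose higher norms are bounded via the usual 2d energy estimates, the $u_K$-terms entering only as bounded smooth coefficients. By Rellich's theorem the orbit $\{u(t):t\ge t_0\}$ is then precompact in every $C^k$, and the uniform bounds make $\frac{d}{dt}\|\nabla\uzk\|_{L^2}^2$ bounded; combined with the displayed integrability this forces $\|\nabla\uzk(t)\|_{L^2}\to0$, whence $\uzk$ being mean-zero gives $\uzk(t)\to0$ in $H^1$ by Poincar\'e and, by interpolation with the uniform bounds, in every $C^k$.

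With this analytic input the $\om$-limit set $\Lambda$ is non-empty, compact, connected and invariant, being the $\om$-limit set of a precompact orbit of a continuous flow. Any $v\in\Lambda$ satisfies $P_{\zt\setminus K}v=\lim\uzk(t_n)=0$, so $v$ is supported in $K$, with $\inttor|v|^2=2E$ and $\inttor|\curl v|^2=I$. By invariance the whole orbit through $v$ stays in $\Lambda$, hence supported in $K$, where $Y\!u\equiv0$; so it solves the genuine 2d Euler equation while remaining supported on finitely many frequencies, and Theorem~\ref{fsupport} forces it to be steady. Thus $v\in\EE_{K,E,I}$, so $\Lambda\subset\EE_{K,E,I}$, and being connected $\Lambda$ lies in a single connected component. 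For the final assertion, suppose $K$ is degenerate and $I$ small, and set $a=u_K$, $b=\uzk$; smallness of $I$ keeps $\|a\|$ small and, after some $t_0$, $\|b\|$ small. In the $L^2$ estimate for $b$ the advection terms $\langle(a\cdot\nabla)b,b\rangle$ and $\langle(b\cdot\nabla)b,b\rangle$ vanish since $a,b$ are divergence-free, leaving only $\langle(b\cdot\nabla)a,b\rangle$, bounded by $\|\nabla a\|_\infty\|b\|_{L^2}^2\le C\sqrt I\,\|b\|_{L^2}^2$; against the gap $-\nu\lambda\|b\|_{L^2}^2$ with $\lambda=\min\{|k|^2:k\in\zt\setminus K,\ k\ne0\}$ this gives, for $I$ small depending on $K$ and $\nu$, $\frac{d}{dt}\|b\|_{L^2}^2\le-\nu\lambda\|b\|_{L^2}^2$ and exponential decay of $b$. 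Degeneracy makes $\PP(a\nabla a)=0$, so $\dot a=-P_K\PP(a\nabla b+b\nabla a+b\nabla b)$ is $O(\|b\|)$ and $a(t)$ converges exponentially to a single $a_\infty$ (a continuity/bootstrap argument keeping $\|a\|$ small throughout). Hence $u(t)\to a_\infty$ exponentially in $L^2$ and, by interpolation, in every $C^k$, and $\Lambda=\{a_\infty\}\subset\EE_{K,E,I}$.

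I expect the main obstacle to lie in the two places where the limiting dynamics must be recognized as the Euler dynamics. In the general case it is upgrading the integrated decay of $\uzk$ to convergence strong enough to pass to the limit in $u\nabla u$ and legitimately invoke Theorem~\ref{fsupport} on the invariant set $\Lambda$; this is where the uniform higher-order bounds and the finiteness of $K$ are indispensable. In the degenerate case it is showing that the dissipative gap on $\zt\setminus K$ genuinely dominates the nonlinear coupling, so that the undamped variable $a$ settles to a point instead of drifting along the manifold of steady states---here the cancellation of the advection terms and the smallness forced by $I$ are the crucial ingredients.
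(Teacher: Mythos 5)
Your proposal is correct and follows essentially the same route as the paper: standard 2d Navier--Stokes theory for existence, uniqueness, smoothing and the energy identity (using that $-(Y\!v,v)$ controls $||\nabla v||_{L^2}^2$ up to a finite-dimensional piece), compactness of the orbit in $C^k$ from the enstrophy estimate, identification of the $\omega$-limit set via Theorem~\ref{fsupport}, and, in the degenerate case, a differential inequality for the part of the solution supported outside $K$ driven by the spectral gap $\min_{k\in\zt\setminus K}|k|^2$ together with the vanishing of the self-interaction of the $K$-modes (you work with the velocity, the paper with the vorticity --- a cosmetic difference). The only genuine variation is in the middle step: you deduce $\uzk(t)\to 0$ directly by a Barbalat-type argument from $\int ||\nabla \uzk||_{L^2}^2\,dt<\infty$ plus uniform higher-order bounds, whereas the paper passes to a limiting trajectory $\bar u$ along $t_j\to\infty$ and uses that energy and enstrophy are constant on it (a LaSalle-type argument) to conclude that the dissipation vanishes and $\bar u$ is an Euler solution supported in $K$; both variants then invoke Theorem~\ref{fsupport} in the same way.
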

\begin{proof}
The issues concerning the existence and regularity having been addressed above, we turn to the statement concerning the long-time behavior. The compactness of the trajectory $u(t)$ in $C^k$  follows from the $L^2$-estimate for $\om=\curl u$ (enstrophy estimate)
\be\la{3-7}
\inttor \frac12\,\om^2(x,t)\,dx+\int_0^t\inttor -\nu\left( Y\!\om,\om\right)\,dx\,dt'=\inttor\frac12\,\om_0^2(x)\,dx\,\,,
\ee
together with the regularity theory for 2d Navier-Stokes on finite time intervals (with minor modifications accounting for our situation here).
Let $z$ be a field in the $\omega-$limit set of the trajectory $u(t)$. We have $u(t_j)\to z$ in $L^2$ for some sequence $t_j\to \infty$. Let us define $u_j(t)$ by
\be\la{3-8}
u_j(t)=u(t+t_j)\,.
\ee
It is easy to see from regularity estimates for our solutions that for large $j$ the solutions $u_j$ are bounded for $t\ge 0$ together with all their derivatives and hence converge to a solution $\bar u$ such that $\bar u|_{t=0}=z$. As both the energy and the enstrophy are Lyapunov functions for the evolution and are continuous with respect to our convergence, we see that on the solution $\bar u$  the enstrophy and energy are constant, equal to $E$ and $I$ respectively. Hence the integral $\inttor (Y\!\bar u(t),\bar u(t))\,dx$ vanishes for each $t\ge 0$.  This  shows that the Fourier modes of $\bar u$ are supported in $K$, and $\bar u$ solves the incompressible Euler equation (as $Y$ vanishes on functions with Fourier support in $K$).  By virtue of Theorem~\ref{fsupport} we know that $\bar u$ has to be a steady state, and therefore $\bar u(t)\equiv z$.

Let us now assume that $K$ is degenerate. We will write our equation for $\om(t)$ as
\be\la{d1}
\om_t=b(\om,\om)+\nu Y\!\om(t),
\ee
where
\be\la{d2}
b(\om^{(1)},\om^{(2)})=-u^{(1)}\nabla \om^{(2)}\,,
\ee
with $u^{(1)}$ being the velocity field generated by $\om^{(1)}$ via the usual Biot-Savart law. Let us write
\be\la{d3}
\om(t)=\om_{D}(t)+\om_{K}(t)\,,
\ee
with
\be\la{d4}
\om_K(t)=\frac1{2\pi} \sum_{k\in K} \hat\om (k,t) e^{ikx}\,.
\ee
As the set $K$ is degenerate, we have
\be\la{d5}
b(\om_K(t),\om_K(t))=0\,.
\ee
Therefore
\be\la{d6}
\om_t=b(\om_D,\om_D)+b(\om_D,\omk)+b(\omk,\om_D)+\nu Y\!\om(t)\,.
\ee
Denoting by $(f,g)$ the $L^2-$ scalar product $\inttor fg\,dx$, we note that at each time we have
\be\la{d7}
(b(\om_D,\om_D),\om_D)=(b(\om_K,\om_D),\om_D)=0\,.
\ee
Therefore
\be\la{d8}
\begin{split}
\frac d {dt} ||\om_D||^2_{L^2} & =(b(\om_D,\om_K),\om_D)+\nu (Y\!\om_D,\om_D)\\ & \le c_K||\om_K||_{L^2} ||\om_D||_{L^2}^2+\nu(Y\!\om_D,\om_D)\\
&
\le (c_K||\om||_{L^2}-\nu\kappa)\,||\om_D||_{L^2}^2\,,
\end{split}
\ee
where all quantities are evaluated at time $t$,
\be\la{d9}
\kappa=\min_{k\in\zt\setminus K}|k|^2\,,
\ee
and $c_K$ is the best constant $c$ in the inequality
\be\la{d10}
||u_D\nabla \om_K||_{L^2}\le c||\om_D||_{L^2}||\om_K||_{L^2}\,,
\ee
which is obviously finite, as due to the finiteness of $K$ we have
\be\la{d11}
||\nabla \om_K||_{L^\infty}\le c'_K ||\nabla \om_K||_{L^2}\,
\ee
for some (finite) constant $c'_K$.
We see from~\rf{d8} that if
\be\la{d12}
I<\frac{\nu\kappa}{c_K}\,,
\ee
the function $\om_D(t)$ approaches zero exponentially in $L^2$. At the same time, given any $k=0,1,2,\dots$, the derivatives $\nabla^k\om(t)$ are bounded point-wise by regularity  and the derivatives $\nabla^k\om_K(t)$ are also bounded point-wise due to the finiteness of $K$. This means that the derivatives $\nabla^k\om_D(t)$ are also bounded point-wise, and from the exponential $L^2-$convergence of $\om_D(t)$ to zero, we can therefore infer point-wise exponential convergence of $\om_D(t)$ and all its derivatives to zero.
Equation~\rf{d6} then shows point-wise exponential convergence of $\om_t$ (and all its derivatives) to zero, and the proof is finished.
\end{proof}
\begin{remark}
It seems to be likely that the $\omega-$limit set of each trajectory always consists of a single point, but a possible proof of this seems to require a more subtle analysis. Note that for $E,I>0$ the set $\EE_{K,E,I}$, when non-empty, will be at least one-dimensional, due to translational invariance.
\end{remark}

\begin{remark}
If $K$ is degenerate and the equilibria of Euler equation with the Fourier support in $K$ are unstable for Euler evolution (which is expected to be the case unless $K$ consists of the lowest non-trivial modes), it is very likely that for generic solutions the quantity $I$ will indeed be small, as required by the last statement of the theorem. This is because until the viscosity  stabilizes the behavior as in the proof above, the trajectory approaching the equilibria can be expected to be repeatedly repelled into the more dissipative regime. A rigorous proof again seems to require a more subtle analysis of the dynamics. If the equilibria in $K$ are stable, then the quantity $I$  might not typically approach small values, although it could again be non-trivial to prove this.

In any case, it seems to be reasonable to conjecture, that for degenerate sets $K$ the solution will not generically approach zero.
\end{remark}

The Kraichnan picture of 2d turbulence suggests that all the Euler equilibria other then the ones given by the lowest non-trivial modes should be unstable, and the lowest modes are stable only when the energy is fixed. It is therefore  conceivable that a generic solution of~\rf{5} for finitely many undamped modes will first approach the lowest modes as a kind of meta-stable state, and -- assuming these are not damped -- then very slowly approach zero. It would of course be of interest to understand the linearized stability of the equilibria and their stable, unstable, and center manifolds.

\section{Finitely many damped frequencies}\label{cofinite}
In this section we assume that $K$ is co-finite, i.\ e.\ only finitely many frequencies are damped. The equation should then be close to Euler equation.
Our main result in this case is the following.
\begin{theorem}\label{thm2}
Assume the set $K$ is symmetric and co-finite and the initial vorticity $\om_0=\curl u_0$ belongs to $L^\infty$. Then the system~\rf{5} has a unique global solution $u$ such that $\om=\curl u$ belongs to $L^\infty_tL^\infty_x$ on any finite time interval. The solution  $u$ satisfies the energy estimate~\rf{9} with $\ve=0$, the enstrophy estimate~\rf{10} with $\ve=0$ and, for some $c_K\ge 0$ which depends only on $K$, we have
\be\la{4-1}
||\om(t)||_{L^\infty}\le ||\om_0||_{L^\infty} + c_K||\om_0||_{L^2}\,\,t\,,\qquad t>0\,.
\ee
The trajectory $u(t)$ is pre-compact in $L^2$ and if $u(t_j)\to z$ in $L^2$ for some sequence $t_j\to\infty$ and $\curl z\in L^\infty$, then the solutions $u_j(t)=u(t+t_j)$ converge as $j\to\infty$ in $L^\infty_{t}L^2_x$ on any finite time interval to a solution of the (incompressible) Euler equation whose Fourier coefficients are supported in $K$.
\end{theorem}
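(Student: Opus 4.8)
The plan is to realise the solution as a vanishing-viscosity limit of the regularised solutions $\ue$ of Proposition~\ref{prop1}, deriving the linear bound~\rf{4-1} \emph{uniformly in} $\ve$, then to prove uniqueness by a Yudovich-type argument in which $Y$ enters with a favourable sign, and finally to extract the long-time behaviour from the energy-dissipation integral. The co-finiteness of $K$ is used only through the finiteness of $\zt\setminus K$: on the fixed finite-dimensional space of trigonometric polynomials with spectrum in $\zt\setminus K$ all norms are equivalent, so $\|Y\!v\|_{L^\infty}\le c_K\|v\|_{L^2}$ with $c_K$ depending only on $K$. The energy and enstrophy identities~\rf{9}--\rf{10}, together with the nonnegativity of $-\nu(Y\cdot,\cdot)$ and $-\ve(Z\cdot,\cdot)$, give the uniform bounds $\|\ue(t)\|_{L^2}\le\|u_0\|_{L^2}$ and $\|\ome(t)\|_{L^2}\le\|\om_0\|_{L^2}$. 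The essential new point is the $L^\infty$-control of $\ome$, and here I would use the identity~\rf{6b}: writing $\ve Z=\ve\dd-\ve Y$ recasts the vorticity form of~\rf{7} as the advection--diffusion equation
\be\la{p1}
\ome_t+\ue\nabla\ome-\ve\dd\ome=(\nu-\ve)\,Y\!\ome\,,
\ee
whose right-hand side has spectrum in $\zt\setminus K$ and therefore satisfies $\|(\nu-\ve)Y\!\ome(t)\|_{L^\infty}\le c_K\|\om_0\|_{L^2}$ by the norm equivalence and the enstrophy bound. The maximum principle for~\rf{p1} now yields~\rf{4-1} with a constant independent of $\ve$. Passing to the limit $\ve\searrow0$ exactly as in the weak-solution construction of Section~\ref{prelim}, and passing the energy and enstrophy relations to the limit, gives a global solution $u$ with $\om\in L^\infty_tL^\infty_x$ on finite intervals obeying~\rf{4-1}.

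Uniqueness I would obtain from the classical Yudovich estimate. For two solutions with the same datum set $\delta u=u_1-u_2$; the difference solves $\partial_t\delta u+u_1\nabla\delta u+\delta u\nabla u_2+\nabla\delta p-\nu Y\delta u=0$, and pairing with $\delta u$ (so that $(u_1\nabla\delta u,\delta u)=0$ and $(\nabla\delta p,\delta u)=0$) gives
\be\la{p2}
\tfrac12\,\tfrac{d}{dt}\|\delta u\|_{L^2}^2+\inttor(\delta u\nabla u_2)\cdot\delta u\,dx=\nu(Y\delta u,\delta u)\le0\,,
\ee
so the damping term only helps. The remaining term is controlled by the Calder\'on--Zygmund bound $\|\nabla u_2\|_{L^p}\le Cp\|\om_2\|_{L^p}$ and interpolation, using that $\|\om_2(t)\|_{L^\infty}$ is finite on each bounded time interval by~\rf{4-1}; optimising in $p$ produces an Osgood inequality for $\|\delta u\|_{L^2}^2$ that forces it to vanish. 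Pre-compactness of $\{u(t)\}$ in $L^2$ is then immediate, since the energy and enstrophy bounds keep $u(t)$ bounded in $H^1$ and $H^1\hookrightarrow L^2$ is compact on $\tor$.

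For the $\om$-limit statement, \rf{9} with $\ve=0$ gives $\int_0^\infty(-\nu(Y\!u,u))\,dt\le\tfrac12\|u_0\|_{L^2}^2<\infty$, so the total dissipation is finite; hence for the shifts $u_j(t)=u(t+t_j)$ the tails $\int_0^T(-\nu(Y\!u_j,u_j))\,dt=\int_{t_j}^{t_j+T}(-\nu(Y\!u,u))\,dt'\to0$ and, by finiteness of $\zt\setminus K$, $\int_0^T\|Y\!u_j\|_{L^2}^2\,dt\to0$. Since $\curl z\in L^\infty$, let $u^Y$ be the unique Yudovich solution of the Euler equation with datum $z$, whose vorticity norm $\|\curl u^Y(t)\|_{L^\infty}$ is conserved. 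I would compare $u_j$ directly with $u^Y$: the difference $w=u_j-u^Y$ obeys an identity of the type~\rf{p2} with the extra term $\nu(Y\!u_j,w)$, a forcing controlled by $(\int_0^T\|Y\!u_j\|_{L^2}^2\,dt)^{1/2}\to0$. Running the Yudovich estimate with the log-Lipschitz bound taken on $u^Y$ (uniform, as its vorticity norm is conserved) and interpolating the $L^{2p'}$-norm of $w$ between $L^2$ and a \emph{fixed} $L^q$, $q<\infty$, in which $u_j$ is bounded uniformly in $j$ through $H^1\hookrightarrow L^q$, yields an Osgood inequality with vanishing initial value $\|w(0)\|_{L^2}=\|u(t_j)-z\|_{L^2}\to0$ and vanishing forcing. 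This gives $u_j\to u^Y$ in $L^\infty_tL^2_x$ on every $[0,T]$; combined with $\int_0^T\|Y\!u_j\|_{L^2}^2\,dt\to0$ it exhibits $u^Y$ as a solution of Euler with spectrum in $K$.

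The hard part is this last step. Because~\rf{4-1} controls $\|\om(t)\|_{L^\infty}$ only up to linear growth, the shifted vorticities $\curl u_j$ are \emph{not} uniformly bounded in $L^\infty$, so one cannot simply extract a weak limit and appeal to Yudovich uniqueness for it. The device of comparing instead with the genuine Euler solution $u^Y$, whose $L^\infty$ vorticity is conserved, and of interpolating through a fixed $L^q$ rather than $L^\infty$ --- so that only the uniform $H^1$ bound on the shifts enters the nonlinear term --- is what allows the stability estimate to close in the absence of a uniform $L^\infty$ bound on $\curl u_j$.
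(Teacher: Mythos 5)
Your proposal is correct, and for the bulk of the theorem --- the vanishing-viscosity construction via the regularized vorticity equation $\om_{\ve t}+\ue\nabla\om_\ve-\ve\dd\om_\ve=(\nu-\ve)Y\!\om_\ve$, the maximum principle combined with $\|Y\!\om\|_{L^\infty}\le c_K\|\om\|_{L^2}$ to get~\rf{4-1} uniformly in $\ve$, and the Yudovich weak--strong stability estimate (with $(Y v,v)\le 0$ and interpolation of $\|v\|_{L^{2p'}}$ between $L^2$ and a fixed $L^r$ controlled by the enstrophy bound) --- it coincides with the paper's argument. Where you genuinely diverge is the identification of the long-time limit. The paper takes the reference solution $\bar u$ to be a solution of the \emph{damped} system~\rf{5} with datum $z$, runs the same weak--strong estimate to get $u_j\to\bar u$, and only afterwards deduces that $\bar u$ solves Euler with spectrum in $K$: since $\|u(t)\|_{L^2}^2\searrow E$ and $\|\bar u(t)\|_{L^2}^2\equiv E$, the energy identity forces $(Y\!\bar u(t),\bar u(t))\equiv 0$, hence $\hat{\bar u}(k,t)=0$ for $k\notin K$. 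You instead compare $u_j$ with the Yudovich Euler solution $u^Y$ from $z$ and treat $\nu Y\!u_j$ as a forcing that vanishes because the total dissipation $\int_0^\infty-\nu(Y\!u,u)\,dt\le\tfrac12\|u_0\|_{L^2}^2$ is finite and $\|Y\!v\|_{L^2}^2\le C_K\bigl(-(Y\!v,v)\bigr)$ on a co-finite $K$. Both routes close; the paper's is slightly leaner because it needs only the monotonicity of the energy rather than a quantitative decay of the damping along the shifts, and it avoids running the Osgood argument with an inhomogeneous term (your version needs the standard but mildly fussy handling of the $\|Y\!u_j\|_{L^2}\|w\|_{L^2}$ forcing alongside the $y^{1-\ve}$ nonlinearity). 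What your version buys is a more explicit conclusion --- the limit is pinned down from the start as the Yudovich solution with datum $z$ --- and your closing observation about why one must interpolate through a fixed $L^q$ rather than $L^\infty$ (the shifted vorticities are not uniformly bounded in $L^\infty$ because~\rf{4-1} grows linearly) is exactly the point that makes the paper's weak--strong formulation necessary as well.
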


\begin{proof}
The energy and the enstrophy bounds are obvious. To prove~\rf{4-1}, we will write our equation in the vorticity form as
\be\la{4-2}
\om_t+u\nabla\om=\nu Y\!\om
\ee
and use the bound
\be\la{4-3}
||Y\!\om||_{L^\infty}\le c_K ||\om||_{L^2}\,,
\ee
which is an immediate consequence of the finiteness of $K$. As $||\om(t)||_{L^2}$ is non-increasing, \rf{4-2} then gives
\be\la{4-4}
\frac d{dt} ||\om(t)||_{L^\infty}\le c_K||\om_0||_{L^2}
\ee
and our bound follows.  Strictly speaking, one needs some regularity of the solution for this proof to work. One way to deal with this issue (in case we do not wish to get into details of justifying the above reasoning under minimal regularity assumptions) is to prove the corresponding bound for the solutions of the regularized equation~\rf{7}. We note that the first equation of~\rf{7} in the vorticity form can be written as
\be\la{4-5}
\om_{\ve t}+\ue\nabla\om_\ve-\ve\dd\om_\ve=(\nu-\ve)Y\!\om_\ve
\ee
and, as $\om_\ve$ is smooth for $t>0$, we have (similarly as above)
\be\la{4-6}
\frac d{dt} ||\om_\ve(t)||_{L^\infty}\le c_K||\om_0||_{L^2}\,.
\ee
This clearly implies the bound~\rf{4-1} for the limiting solution $\om$ as $\ve\searrow 0$. We will next prove uniqueness, and therefore it is enough to have the bound for the particular solution obtained as a limit of (a subsequence of) the solutions $\om_\ve$.

The uniqueness follows from the argument below for the last statement of the theorem. It gives a somewhat stronger statement than uniqueness in that it shows a stability property of the solutions with respect to perturbations of initial conditions (in some specific classes of functions). The main idea goes back to Yudovich~\cite{Yudovich}, see also~\cite{MajdaBertozzi}.

Let $\bar u$ be a solution of~\rf{5} with $\bar u(0)=z$ such that its vorticity $\bar\om$ satisfies the bound~\rf{4-1} with $\om_0$ replaced by $\curl z$. At this point it may not be clear that the solution is unique, but we know at least one such solution exists. We will show that $u_j$ approach $\bar u$ on any finite time interval. It will be clear that the argument also gives uniqueness of $\bar u$.
We let
\be\la{4-7}
v_j=u_j-\bar u\,.
\ee
We have , for a suitable function $q_j=q_j(x,t)$,
\be\la{4-8}
v_{jt}+u_j\nabla v_j+v_j\nabla \bar u +\nabla q_j-\nu Y\!v_j=0\,.
\ee
Taking scalar product with $v_j$ and integrating over $\tor$ we obtain
\be\la{4-8b}
\frac {d}{dt}\inttor |v_j(x,t)|^2\,dx \le 2\inttor |\nabla\bar u(x,t)|\,\,|v_j(x,t)|^2\,dx\,,
\ee
where we used that $\inttor (Y\! v_j, v_j)\,dx\le 0$. We now write
\be\la{4-9}
\inttor|\nabla \bar u(t)|\,\,|v_j(t)|^2\,dx\le ||\nabla \bar u(t)||_{L^p}||v_j(t)||_{L^{2p'}}^2\,,
\ee
where $p>1$ and , as usual, $1/p+1/p'=1$. For any $r>2$ and $p'<r$ we have by elementary interpolation (or H\"older inequality)
\be\la{4-10}
||v_j(t)||_{L^{2p'}}\le ||v_j(t)||_{L^2}^{1-\frac{r}{(r-2)p}}||v_j(t)||_{L^r}^{\frac{r}{(r-2)p}}\,.
\ee
Let us fix $T>0$ and let us assume that
\be\la{4-11}
||\bar \om(t)||_{L^\infty}\le M_1\,,\qquad t\in (0,T)\,.
\ee
Yudovich's trick now is to use  the estimate
\be\la{4-12}
||\nabla \bar u(t)||_{L^p}\le p A||\bar \om(t)||_{L^p}\,,
\ee
where $A>0$ is a suitable constant independent of $p$.
This gives
\be\la{4-13}
||\nabla \bar u(t)||_{L^p}\le pM_2\,,\qquad t\in(0,T)\,,
\ee
for a suitable constant $M_2>0$ independent of $p$.
We also note that for any fixed $r>2$ the norms $||v_j(t)||_{L^r}$ are uniformly bounded in $j$, thanks to the enstrophy estimates for $u_j$.
Letting
\be\la{4-14}
y_j=y_j(t)=||v_j(t)||^2_{L^2}\,,\,\qquad \ve = \frac {r}{(r-2)p}\,,
\ee
we thus conclude that for some $M>0$ we have
\be\la{4-15}
\ve \frac{d y_j}{dt}  \le My_j^{1-\ve}
\ee
This means that
\be\la{4-16}
y_j(t)\le \left[(y_j(0))^\ve+Mt\right]^{\frac 1\ve}\,.
\ee
As $y_j(0)\to 0$ for $j\to\infty$, it is easy to see that $y_j(t)$ converges to zero locally uniformly for $0\le t< 1/M $. The argument can now be repeated, and after at most $MT+1$ iterations we get uniform convergence of $v_j$ to zero in $L^2$ on the interval $[0,T]$.

Letting
\be\la{4-18}
E=\lim_{t\to\infty} ||u(t)||^2_{L^2}
\ee
(which exists, as $||u(t)||_{L^2}$ is non-increasing), we clearly have
\be\la{4-19}
||z||_{L^2}^2=E\,,\qquad ||\bar u(t)||_{L^2}=E\,,\qquad t>0\,.
\ee
Therefore
\be\la{4-20}
(Y\!\bar u(t)\,,\,\bar u(t))=0\,,\qquad t>0\,,
\ee
and we see that the Fourier coefficients of $\bar u(t)$ must be supported outside of $K$ for each $t>0$.
\end{proof}
\begin{remark}
All what is needed in the uniqueness/stability proof is that one can derive~\rf{4-8b} (for which a certain amount of regularity is needed), that $||\bar \om(t)||_{L^\infty}$ is bounded in $(0,T)$, and that $||u_j(t)||_{L^r}$ are uniformly bounded for some $r>2$. Hence the uniqueness/stability proof  gives a {\it weak-strong} result, in the sense that the regularity assumptions are mostly needed just for one solution. As the examples of~\cite{CamilloLaszlo} show, one cannot get a weak-strong uniqueness result if the weaker solution is  merely  assumed to be H\"older continuous. This is related to Onsager's conjecture (as discussed in~\cite{CamilloLaszlo}).
\end{remark}

\section{Euler solutions  supported on finitely many Fourier modes}\label{modes}
\begin{theorem}\label{fsupport}
If $u(x,t)$ is a solution of 2d incompressible Euler's equation on $\tor$ which is supported on finitely many Fourier modes, then $u$ is independent of time. Moreover, its ``Fourier support" is either a subset of a circle centered at the origin, or a line passing through the origin.
\end{theorem}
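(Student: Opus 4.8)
The plan is to pass to the vorticity formulation and read the steady-state structure directly off the Fourier-side equation. Write $\om=\curl u$, and let $S\subset\zt\setminus\{0\}$ be the (finite, symmetric) Fourier support, so that $\hat\om(k,t)=0$ for $k\notin S$ and all $t$. The mean velocity is a conserved constant; by Galilean invariance (which preserves finiteness of the support and turns a uniform translation into steadiness) I may assume it vanishes, so that $\hat u(k)=ik^\perp|k|^{-2}\hat\om(k)$. Symmetrising the convolution that computes $u\nabla\om$ in $j$ and $l$, the transport equation $\om_t+u\nabla\om=0$ becomes
\[
\frac{d}{dt}\hat\om(k,t)=c_0\!\!\sum_{\substack{j+l=k\\ j,l\in S}}\!\!\sigma(j,l)\,\hat\om(j,t)\,\hat\om(l,t),\qquad \sigma(j,l)=(j\times l)\Big(\tfrac1{|j|^2}-\tfrac1{|l|^2}\Big),
\]
where $j\times l=j_1l_2-j_2l_1$ and $c_0$ is a universal constant. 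The coefficient $\sigma(j,l)$ vanishes exactly when $j$ and $l$ are parallel or when $|j|=|l|$. Since $S$ is finite, this is a polynomial ODE on a finite-dimensional space, hence each $\hat\om(k,\cdot)$ is real-analytic in $t$.

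Next I would record the reduction that makes a purely geometric statement carry all the weight. Call a pair $j,l\in S$ \emph{degenerate} if $\sigma(j,l)=0$. My goal will be to show that \emph{every} pair in $S$ is degenerate. This immediately yields the theorem: if every coefficient in the displayed ODE vanishes then $\frac{d}{dt}\hat\om(k,t)\equiv0$, so $u$ is steady; and an elementary check shows that all pairs of $S$ being degenerate is equivalent to $S$ lying on a single circle centred at the origin (all moduli equal) or on a single line through the origin (all directions equal). Indeed, if $S$ had two distinct moduli and two distinct directions, one produces — using the symmetry $k\mapsto -k$ to find a third ray when needed — a pair with $|j|\ne|l|$ and $j\nparallel l$, hence non-degenerate.

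The core of the argument exploits the equation at frequencies \emph{outside} $S$ together with analyticity. Fix $k_0\notin S$; since $\hat\om(k_0,\cdot)\equiv0$, the right-hand side must vanish identically, giving a quadratic relation among the analytic functions $\hat\om(\cdot,t)$. The idea is to choose $k_0=p+q$ so that $\{p,q\}$ is the \emph{unique} pair of elements of $S$ with $j+l=k_0$. I would arrange this by an extreme-frequency argument: for a generic direction $v$, let $p$ be the unique maximiser of $k\mapsto k\cdot v$ on $S$ and $q$ the unique maximiser on $S\setminus\{p\}$; choosing $v$ near the direction of a frequency of maximal modulus one ensures $q\cdot v>0$, whence $(p+q)\cdot v>\max_{S}(k\cdot v)$ forces $p+q\notin S$, and the same strict inequality shows $\{p,q\}$ is the only admissible representation. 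The relation then collapses to $\sigma(p,q)\,\hat\om(p,t)\hat\om(q,t)\equiv0$. Because a product of two real-analytic functions vanishes identically only if one factor does, and because $p,q\in S$ means neither factor is identically zero, we conclude $\sigma(p,q)=0$: the extreme pair is degenerate.

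The hard part will be propagating degeneracy from the pairs one can isolate as sums lying outside $S$ (essentially the vertices of the convex hull of $S$, or the outermost modulus shell) to \emph{all} pairs, and hence to the global circle-or-line alternative. For an interior frequency $k_0=a+b$ there is no reason that $k_0\notin S$, nor that the representation $a+b=j+l$ be unique, so the clean product-vanishing step is unavailable. I would attack this by induction on the modulus shells of $S$ (equivalently, by peeling off successive convex-hull layers): at each stage the degeneracy of the newly exposed extreme pairs, combined with the symmetry of $S$, should force any configuration carrying two distinct moduli and two distinct directions to shed frequencies, contradicting invariance of the support. The bookkeeping required to neutralise the non-unique representations at non-extreme frequencies, and to assemble the local degeneracies into the single global dichotomy, is where the genuine work lies; once $S$ is known to sit on a circle or a line, the reduction above gives that $u$ is independent of time.
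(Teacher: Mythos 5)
Your setup matches the paper's: the Fourier-side vorticity ODE, analyticity of the coefficients $\hat\om(k,\cdot)$, the notion of a degenerate pair (where the interaction coefficient vanishes), the reduction of the theorem to ``$S$ is a degenerate set,'' and the observation that a symmetric finite degenerate set avoiding the origin lies on a circle centered at the origin or a line through it. Your extreme-direction argument is also sound as far as it goes: isolating a frequency $p+q\notin S$ with a unique representation forces $\sigma(p,q)\,\hat\om(p,t)\,\hat\om(q,t)\equiv0$, and minimality of $S$ plus analyticity gives $\sigma(p,q)=0$. But this only establishes degeneracy of a single extremal pair, and you explicitly defer the remainder --- propagating degeneracy to all of $S$ --- as ``where the genuine work lies.'' That remainder \emph{is} the proof; what you have written is the easy first move, and the proposed continuation (induction on Euclidean modulus shells, or peeling convex-hull layers, with the expectation that non-degenerate configurations ``shed frequencies'') is not an argument. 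In particular it does not address the two obstructions you yourself identify: a candidate sum may land back in $S$, and it may admit several representations.

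The paper closes this gap with two ideas you are missing. First, one does not need $m\notin S$ to be the sum of a \emph{unique} pair of elements of $S$, only the sum of a unique \emph{non-degenerate} pair: representations by degenerate pairs contribute nothing to the right-hand side of the ODE, so they are harmless. This weaker requirement is what makes non-extremal configurations tractable, and the whole proof is then run as a single global contradiction: assume no such $m$ exists (and $S$ is not degenerate) and derive absurdity. Second, the geometry is organized not by Euclidean shells but by the norm $N$ whose unit ball is the convex hull $\sconv$ of $S$. Step 1 uses a linear functional separating a boundary edge $[A_1,A_2]$ to show all extremal points lie on one circle centered at the origin and the edges contain no further points of $S$ (if $P_1,\dots,P_s$ are the points of $S$ on the edge, the first non-degenerate pair $\{P_1,P_j\}$ along it produces a forbidden $m=P_1+P_j$). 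Step 2 takes $B\in S$ non-extremal with $N(B)$ maximal and considers $Q=A_2+B$; the decisive point is that any competing non-degenerate representation $Q=A_1+P''$ forces $P''=B+(A_2-A_1)$, and since the chord of $\{N\le N(B)\}$ inside the triangle $OA_1A_2$ is strictly shorter than $|A_2-A_1|$, one gets $N(P'')>N(B)$, hence $P''$ is extremal and the pair $\{A_1,P''\}$ is degenerate by Step 1 --- a contradiction showing $S$ consists only of its extremal points and is therefore degenerate. Without these two steps (or a worked-out substitute), your proposal does not prove the theorem. A minor further point: the equation you and the paper use presupposes zero mean velocity; your appeal to Galilean invariance converts a traveling wave into a steady state but then the time-independence conclusion applies to the boosted field, so the reduction should be stated for the standard zero-mean setting.
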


\begin{proof}
The Fourier coefficients of any real-valued integrable functions (or, in fact, any distribution) $f\colon\tor\to\R$ have the property that \be\la{5-1}
\hat f(-k)=\overline{\hat f(k)}\,,
\ee
which we will use in the proof.  As in this paper we are dealing with real-valued solutions, we will take advantage of the simplifications offered by~\rf{5-1}, although the statement remains true for complex solutions.
We will use the following terminology.
\begin{itemize}
\item[(i)] The {\it Fourier support} of a function $f\colon\tor\to \R$ is the set $\{k\in {\ZZ}^2\,,\,\hat f(k)\ne 0\}$.
\item[(ii)] A set $S\subset {\ZZ}^2 $ is {\it symmetric} if it is invariant under the map $k\to -k$.
\item[(iii)] An (unordered) pair $\{k,l\}$ of two distinct points $k=(k_1,k_2),l=(l_1,l_2)\in \zt\setminus\{(0,0)\}$ is called {\it degenerate} if either $k,l$ lie on the same circle centered at the origin \hbox{(i.\ e.\ $k_1^2+k_2^2=l_1^2+l_2^2$),} or $k,l$ lie on the same line passing through the origin (i.\ e.\  $k_1l_2-k_2l_2=0$). In the former case we call the pair to be {\it c-degenerate}, and in the latter case we call the pair to be {\it l-degenerate}. A pair which is not degenerate is called {\it non-degenerate}.

    In what follows we will only be dealing with unordered pairs, and therefore by ``pair" we will always mean an unordered pair.
\item[(iv)] A {\it degenerate set} $S\subset \zt$ is a set for which each two-point subset $\{k,l\}\subset S$ is degenerate.

\end{itemize}

We will be dealing with Fourier support of vorticity fields and it is useful to keep in mind that {\sl the Fourier support of a vorticity field never contains the origin $(0,0)$}, as $\inttor \curl u = 0$.

It is easy to see that {\sl a finite subset of $\zt$ which does not contain the origin is degenerate if and only if it is a subset of a circle centered at the origin or a line passing thought the origin.} In other words, if all  pairs in a set $S$ are degenerate, then all of them are c-degenerate or all of them are l-degenerate. (We are not claiming that these two possibilities are mutually exclusive.)

From~\rf{5-1} we see that the Fourier support of a real function on $\tor$ is always a symmetric subset of $\zt$.

A particularly useful way to write the Euler equation for a vorticity field $\om(x,t)$ on $\tor$ in our context here is the following, see for example~\cite{HM}:
\be\la{5-2}
\frac d{dt} \,\hat \om(m,t)=-\frac 1{4\pi}\sum_{k+l=m}\left(k_1l_2-k_2l_1\right)\left(\frac1{|k|^2}-\frac1{|l|^2}\right)\hat\om(k,t)\hat\om(l,t)\,.
\ee
Let us now consider the vorticity field $\om(x,t)$  of a solution of the Euler equation on some time interval $(t_1,t_2)$ satisfying our assumptions and let $S\subset\zt$ be a finite set such that the Fourier support of $\om(t)$ is contained in $S$ for each $t\in(t_1,t_2)$. We can assume that $S$ is the minimal set with this property, which means that for each $k\in S$ we have $\hat\om(k,t)\ne 0$ for some $t\in(t_1,t_2)$. Due to the finiteness of $S$, we are dealing with a solution of an finite-dimensional ODE given by polynomials, and hence all functions $t\to \hat\om(k,t)$ are analytic. Together with the minimality of $S$ this implies that the set $Z\subset\tjd$ of times where at least one of the functions $\hat \om(k,t)\,,\,k\in S,$ vanishes cannot have an accumulation point in $\tjd$.

To show that the set $S$ must be degenerate, it is enough to establish the following statement:

\sm
\noindent
\parbox{1pt}
 \ ($*$)\ \parbox{3pt}\ \ \parbox{5in}{\sl If the set $S$ is not degenerate, then there exists a non-zero element $m\in\zt\setminus S$ such that $m=k+l$ for exactly one  non-degenerate pair $\{k,l\}\in S$.}

 Once ($*$) is proved, then for a set $S$ which is not degenerate and $m$ as in ($*$) expression~\rf{5-2} gives
 \be\la{5-4}
 \frac d{dt}\hat \om(m,t)\ne 0\,\qquad t\in \tjd\setminus Z\,,
 \ee
 contradicting  $m\notin S$. Hence $S$ must be degenerate.

 It remains to prove ($*$). Arguing by contradiction, let us assume that we have a finite set $S\subset\zt\setminus\{(0,0)\}$ as above which is not degenerate such that either $S$ is closed under addition of its non-degenerate pairs, or each non-zero $m\in \zt\setminus S$ with $m=k+l$ for some non-degenerate  pair of points $\{k,l\}$ in  $S$ can also be expressed as $m=k'+l'$ for a different non-degenerate  pair of points $\{k',l'\}$ in $S$.

  We can consider $S$ as a subset of $\R^2$ (into which $\zt$ is naturally imbedded) and let $\sconv$ be the convex hull of $S$. As $S$ is symmetric, $\sconv$ is also symmetric (i.\ e.\ invariant under $k\to-k$). If it consist of a line, then $S$ is obviously degenerate, and therefore the only non-trivial case is when $\sconv$ is a symmetric convex polygon with the origin belonging to its interior.  By classical results about convex sets, there is a unique norm $N$ on $\R^2$ such that
  \be\la{5-5}
  \sconv=\{x\in \R^2\,, \, N(x)\le 1\}.
  \ee
  Let $A_1,\dots, A_r$ be the extremal points of $\sconv$.

   The proof will  proceed in two steps.

   \sm
   {\noindent}
   {\it Step 1}

   \sm
   \noindent
   We show that {\sl  the points $A_1,\dots, A_r$ lie on a circle centered at the origin and, moreover,}
  \be\la{5-6}
  \{x\in\R^2\,,\,N(x)=1\}\cap S = \{A_1,\dots, A_r\}\,.
  \ee
 (Note that~\rf{5-6} amounts to saying that  the segments at the boundary of the polygon $\sconv$ do not contain any points of $S$ other than the extremal points.)

  To prove this, let us assume without loss of generality that the boundary of our polygon $\sconv$ is given by the segments $[A_1,A_2], [A_2,A_3],\dots,[A_{r-1},A_r], [A_r,A_1]$. Denoting by $|A|$ the euclidean distance of $A$ from the origin, we aim to show that
  \be\la{5-7}
  |A_1|=|A_2|\,\qquad \hbox{and $\qquad [A_1,A_2]\cap S=\{A_1,A_2\}$\,.}
  \ee
  By elementary convex analysis we  can find a linear function $L$ on $\R^2$ such that
  \be\la{5-7b}
  L(A_1)>L(A_2)>\max_{P\in (S\setminus[A_1,A_2])\cup \{O\}} L(P)\,\,,
  \ee
  where $O$ denotes the origin. Let $A_1=P_1, P_2,\dots, P_s=A_2$ be the list of all points in $S\cap [A_1,A_2]$ ordered so that
  \be\la{5-8}
  L(P_1)>L(P_2)>\dots>L(P_s)\,.
  \ee
  As the line passing through the points $A_1,A_2$ cannot pass through the origin, the set $\PP=\{P_1,\dots,P_s\}$ contains no l-degenerate pairs. If all pairs $\{P_1,P_j\}\,,\,j\ge 2$ are degenerate, they must always be c-degenerate, and hence they all lie on one circle centered at the origin. This is only possible if $\PP=\{A_1,A_2\}$ and $|A_1|=|A_2|$. If not all pairs $\{P_1,P_j\}$ are degenerate, let us consider the smallest index $1<j\le r$ such that $\{P_1,P_j\}$ is not degenerate and the point $Q=P_1+P_j$. Clearly
  \be\la{5-9}
  L(Q)=L(P_1)+L(P_j)\ge L(A_1)+L(A_2)>L(A_1)
  \ee
  and hence $Q\notin S$. Moreover, if
  \be\la{5-10}
  Q=P'+P''
  \ee
  for some  non-degenerate pair $\{P',P''\}$ of points of $S$  we must have
  \be\la{5-11}
  L(P')+L(P'')=L(P_1)+L(P_j)\,.
  \ee
  In view of~\rf{5-7b} and~\rf{5-8} this is possible only if $\{P',P''\}\subset\{P_1,P_2,\dots, P_j\}$. As the  pairs $\{P_1,P_2\},\{P_1,P_3\},\dots\{P_1,P_{j-1}\}$ are all c-degenerate by the definition of $j$, the points $P_1,\dots,P_{j-1}$ must all lie on one circle centered at the origin (and we also see that $j\le 3$, which however will not be needed). Therefore, taking into account~\rf{5-8}, the only possibility for the  pair $\{P',P''\}$ to be both non-degenerate and satisfy~\rf{5-11} is $\{P',P''\}=\{P_1,P_j\}$. We see that the point $Q$ does not belong to $S$ and can be expressed as a sum of a non-degenerate pair in $S$ in exactly one way.
  This contradiction concludes Step 1 of our proof.

  \sm
  \noindent
  {\it Step 2}

  \sm
  \noindent
  We show that
  \be\la{5-12}
  S=\{A_1,\dots, A_r\}\,.
  \ee
Arguing again by contradiction, assume that this is not the case and consider $B\in S$ such that
\be\la{5-13}
0<N(B)=\max_{P\in S\setminus\{A_1,\dots, A_r\}} N(P)\,.
\ee
We can assume without loss of generality that $B$ is in the (closed) triangle $OA_1A_2$, where $O$ is again the origin. In fact, denoting by $C$ the center of the segment $[A_1,A_2]$, we can assume that $B$ belongs to the (closed) triangle $OCA_1$. Let $M$ be the linear function on $\R^2$ such that $M=N$ on the triangle $OA_1A_2$. Consider now the  pair $\{B,A_2\}$ and the point $Q=A_2+B$. The pair $\{B, A_2\}$ is clearly non-degenerate and
\be\la{5-14}
1=M(A_1)=M(A_2)>M(B)\ge\max_{P\in (S\setminus\{A_1,\dots,A_r\})\cup \{O\}}M(P)\,.
\ee
Assume
\be\la{5-14b}
Q=A_2+B=P'+P''
\ee for another non-degenerate pair $\{P'\,,\,P''\}$ in $S$.
Then
\be\la{5-15}
M(P')+M(P'')=M(B)+M(A_2)\,,
\ee
and as one of the points $P',P''$ must belong to $S\setminus\{A_1,\dots,A_r\}$ (the set $\{A_1,\dots,A_r\}$ being degenerate by Step 1), we see from~\rf{5-13}, the equality $M(B)=N(B)$, \rf{5-14}, and~\rf{5-15} that after switching the r\^ole of $P',P''$, if necessary, we may assume that
\be\la{5-16}
P'\in\{A_1,A_2\}\qquad\hbox{ and $\qquad M(P'')=M(B)$\,.}
\ee
The case $P'=A_2$ can be ruled out, since~\rf{5-14} would then imply that $P''=B$, and we would have $\{P',P''\}=\{A_2,B\}$.
This means that $P'=A_1$ and, by~\rf{5-14b},
\be\la{5-17}
P''=B+(A_2-A_1)\,.
\ee
We see that $P''$ is obtained from $B$ by a shift along the direction $A_2-A_1$ by the length of the segment $[A_1,A_2]$. However, the side of the convex polygon given by
$\{x\,,\,N(x)\le N(B)\}$ contained in the triangle $OA_1A_2$ is strictly shorter that $|A_2-A_1|$. Therefore $N(P'')>N(B)$. By definition of $B$ this means that $P''\in\{A_1,\dots, A_r\}$ and by Step 1 the pair $\{P',P''\}$ is degenerate. This is again a contradiction and the proof is finished.
\end{proof}

\begin{remark}
With some additional reasoning, our argument above really shows that if $\om(t)$ is a solution of Euler equation which is in some uniqueness class and its Fourier coefficients are supported in a fixed bounded set $S\subset\zt$ for a set of times which has a finite accumulation point, then $\om(t)$ has to be constant in time. Does there exist a non-stationary solution of 2d Euler equation in $\tor$ such that the Fourier support of $\om(t)$ is bounded for two different times $t_1$ and $t_2$? We do not know the answer to this question. One can also consider its variants for finitely many times $t_1<t_2<\dots <t_n$, or infinitely many times without a finite accumulation point.
\end{remark}

\begin{remark}
In addition to the stationary solutions with finite Fourier support, there are of course also many stationary solutions for which the Fourier support is infinite. This can be seen for example by considering solutions with $\om=\Delta \psi$, where $\psi$ satisfies
\be\la{5-18}
\Delta \psi = f(\psi)\,,
\ee
for suitable non-linear polynomial functions $f$.
\end{remark}

\bb

{\centerline {\bf Acknowledgements}}

\bb

The research of W.~H. was supported in part by grant DMS 1159376 from the National Science Foundation. The research of V.~S. was supported in part by grants DMS 1159376 and  DMS 1362467 from the National Science Foundation.

\bb

\bigskip

\bb
\noindent
\textsc{Department of Mathematics, Princeton University}

\smallskip
\noindent
\textsc{School of Mathematics, University of Minnesota}

\begin{thebibliography}
{99}
\bibitem{Aubin}
Aubin, J.-P.  {\it Un th\'eor\`eme de compacit\'e}, C. R. Acad. Sci. Paris 256 (1963) pp. 5042--5044.
\bibitem{CamilloLaszlo}
 Buckmaster, T., De Lellis, C., Isett, P., Székelyhidi, L., {\it  Anomalous dissipation for 1/5-H\"older Euler flows}, Ann. of Math. (2) 182 (2015), no. 1, 127–172.
\bibitem{Constantin}
Constantin, P., Foias, C.,
{\it Global Lyapunov exponents, Kaplan-Yorke formulas and the dimension of the attractors for 2D Navier-Stokes equations},
Comm. Pure Appl. Math. 38 (1985), no. 1, 1--27.
\bibitem{MajdaBertozzi}
Majda, A. J., Bertozzi, A. L.,
{\it Vorticity and incompressible flow},
Cambridge Texts in Applied Mathematics 27, Cambridge University Press, Cambridge, 2002.
\bibitem{HM}
Hairer, M., Mattingly, J.\ C.,\,  {\sl Ergodicity of the 2D Navier-Stokes equations with degenerate stochastic forcing}, Ann. of Math. (2) 164 (2006), no. 3, 993--1032.

\bibitem{[Kraichnan 1967]} Kraichnan, R.H., Inertial ranges in two
dimensional turbulence, \textit{Physics of Fluids}, \textbf{10}\,(7),
pp. 1417--1423, 1967.

\bibitem{Kuksin}
 Kuksin, S. B., {\it Randomly forced nonlinear PDEs and statistical hydrodynamics in 2 space dimensions}, Zurich Lectures in Advanced Mathematics, European Mathematical Society, Z\"urich, 2006.

\bibitem{Ladyzhenskaya}
 Ladyzhenskaya, O. A.,{\it  The mathematical theory of viscous incompressible flow}, Second English edition, Mathematics and its Applications, Vol. 2 Gordon and Breach, Science Publishers, New York-London-Paris 1969
\bibitem{Ladyzhenskaya2}
 Ladyzhenskaya, O. A., {\it Finding minimal global attractors for the Navier-Stokes equations and other partial differential equations},  Uspekhi Mat. Nauk 42 (1987), no. 6(258), 25--60, 246.
 \bibitem{Lions}
 Lions, J.L., {\it Quelque methodes de r\'esolution des problemes aux limites non lin\'eaires} Paris, 1969.
 \bibitem{Yudovich}
  Yudovich, V. I.,{\it  Non-stationary flows of an ideal incompressible fluid}, \v Z. Vy\v cisl. Mat. i Mat. Fiz. 3 1963 1032--1066.
\end{thebibliography}
\end{document}